\newtheorem{theorem}{Theorem}[section]
\newtheorem{lemma}[theorem]{Lemma}
\newtheorem{corollary}[theorem]{Corollary}
\theoremstyle{definition}
\newtheorem{definition}[theorem]{Definition}
\theoremstyle{remark}
\newtheorem{remark}[theorem]{Remark}
\numberwithin{equation}{section}
\newcommand{\xfi}{\mathscr{X}_{\mathscr{C}}}
\newcommand{\mc}{\mathscr{C}}
\newcommand{\md}{\mathscr{D}}
\newcommand{\mw}{\mathscr{W}}
\newcommand{\mv}{\mathscr{V}}
\newcommand{\qn}{q_1 q_2 \cdots q_n}
\newcommand{\qk}{q_1 q_2 \cdots q_k}
\newcommand{\formalsum}{\sum_{n=1}^{\infty} \frac {E_n} {q_1 q_2 \ldots q_n}}
\newcommand{\ab}{(\al,\be)}
\newcommand{\la}{\lambda}
\newcommand{\al}{\alpha}
\newcommand{\ga}{\gamma}
\newcommand{\be}{\beta}
\newcommand{\labeq}[1]{\label{eqn:#1}}
\newcommand{\refeq}[1]{(\ref{eqn:#1})}
\newcommand{\labt}[1]{\label{thm:#1}}
\newcommand{\reft}[1]{Theorem~\ref{thm:#1}}
\newcommand{\labl}[1]{\label{lemma:#1}}
\newcommand{\refl}[1]{Lemma~\ref{lemma:#1}}
\newcommand{\labd}[1]{\label{definition:#1}}
\newcommand{\labc}[1]{\label{coro:#1}}
\newcommand{\refc}[1]{Corollary~\ref{coro:#1}}
\newcommand{\floor}[1]{\lfloor #1 \rfloor} 
\newcommand{\ceil}[1]{\lceil #1 \rceil}
\begin{document}

\title{On winning sets and non-normal numbers}

\author{Bill Mance}
\address{Department of Mathematics, The Ohio State University, Columbus, Ohio 43210-1174}
\email{mance@math.ohio-state.edu}
\thanks{I would like to thank  Vitaly Bergelson, Dimitry Kleinbock, and Jim Tseng for helpful discussions.}

\subjclass[2000]{Primary 11K16, 11A63}

\date{October 12, 2010 and, in revised form, ---- --, ----.}


\keywords{ Cantor series, Normal numbers,  Schmidt games, Winning sets}

\begin{abstract}
In \cite{SchmidtGames}, W. Schmidt proved that the set of non-normal numbers in base $b$ is a {\it winning set}.   We generalize this result by proving that many sets of non-normal numbers with respect to the Cantor series expansion are winning sets.  As an immediate consequence, these sets will be shown to have full Hausdorff dimension.
\end{abstract}

\maketitle


\bibliographystyle{amsplain}

\section{Introduction}
\label{sec:1}
\subsection{Winning sets}
In \cite{SchmidtGames}, W. Schmidt proposed the following game between two players: Alice and Bob.  Let $\al \in (0,1)$, $\be \in (0,1)$, $S \subset \mathbb{R}^n$, and let $\rho(I)$ denote the radius of a set $I$.  Bob first picks any closed interval $B_1\subset \mathbb{R}^n$.  Then Alice picks a closed interval $A_1 \subset B_1$ such that $\rho(A_1)=\al \rho( B_1)$.  Bob then picks a closed interval $B_2 \subset W_1$ with $\rho(B_2)=\be \rho(A_1)$.  After this, Alice picks a closed interval $A_2 \subset B_2$ such that $\rho(A_2)=\al \rho(B_2)$, and so on.  We say that the set $S$ is {\it $(\al,\be)$-winning} if Alice can play so that
\begin{equation}
\bigcap_{n=1}^{\infty} B_n \subset S.
\end{equation}
The set $S$ is {\it $(\al,\be)$-losing} if it is not $(\al,\be)$-winning and {\it $\al$-winning} if it is $\ab$-winning for all $0< \be < 1$.
Winning sets satisfy the following properties:\footnote{See \cite{Dani} and \cite{SchmidtGames}.}
\begin{enumerate}
\item If $S \subset \mathbb{R}^n$ is an $\al$-winning set, then the Hausdorff dimension of $S$ is $n$.
\item The intersection of countably many $\al$-winning sets is $\al$-winning.
\item Bi-Lipshitz homeomorphisms of $\mathbb{R}^n$ preserve winning sets.
\end{enumerate}



\subsection{Normal numbers}

\begin{definition}\labd{1.1} Let $b$ and $k$ be positive integers.  A {\it block of length $k$ in base $b$} is an ordered $k$-tuple of integers in $\{0,1,\ldots,b-1\}$.  A {\it block of length $k$} is a block of length $k$ in some base $b$.  A {\it block} is a block of length $k$ in base $b$ for some integers $k$ and $b$.
\end{definition}

\begin{definition}\labd{1.2} Given an integer $b \geq 2$, the {\it $b$-ary expansion} of a real $x$ in $[0,1)$ is the (unique) expansion of the form
$$
x=\sum_{n=1}^{\infty} \frac {E_n} {b^n}=0.E_1 E_2 E_3 \ldots
$$
such that $E_n$ is in $\{0,1,\ldots,b-1\}$ for all $n$ with $E_n \neq b-1$ infinitely often.
\end{definition}

Denote by $N_n^b(B,x)$ the number of times a block $B$ occurs with its starting position no greater than $n$ in the $b$-ary expansion of $x$.

\begin{definition}\labd{1.3} A real number $x$ in $[0,1)$ is {\it normal in base $b$} if for all $k$ and blocks $B$ in base $b$ of length $k$, one has
$$
\lim_{n \rightarrow \infty} \frac {N_n^{b}(B,x)} {n}=b^{-k}.
$$
\end{definition}

 Borel introduced normal numbers in 1909 and proved that almost all (in the sense of Lebesgue measure) real numbers in $[0,1)$ are normal in all bases.  The best known example of a number that is normal in base $10$ is due to Champernowne \cite{Champernowne}.  The number
$$
H_{10}=0.1 \ 2 \ 3 \ 4 \ 5 \ 6 \ 7 \ 8 \ 9 \ 10  \ 11 \ 12 \ldots ,
$$
formed by concatenating the digits of every natural number written in increasing order in base $10$, is normal in base $10$.  Any $H_b$, formed similarly to $H_{10}$ but in base $b$, is known to be normal in base $b$. Since then, many examples have been given of numbers that are normal in at least one base.  One  can find a more thorough literature review in \cite{DT} and \cite{KuN}.

Suppose that $X=\{x_n\}_{n=1}^{\infty}$ is a sequence of real numbers.  For a positive integer $N$ and some $I \subset [0,1)$, we define $A_N(I,X)$ to be the number of terms $x_n$ with $1 \leq n \leq N$, for which $x_n-\floor{x_n} \in I$.  Thus, we may write
$$
A_N(I,X)=\#\{n  \in [1,N] :  x_n-\floor{x_n} \in I   \}.
$$

\begin{definition}
The sequence $X=\{x_n\}_{n=1}^{\infty}$ is {\it uniformly distributed mod $1$} if for every pair $a,b$ of real numbers with $0 \leq a<b < 1$, we have
\begin{equation}
\lim_{N \to \infty} \frac {A_N([a,b],X)} {N}=\la([a,b])=b-a.
\end{equation}\end{definition}

\begin{theorem}\labt{normalisud}
A real number $x \in [0,1)$ is normal in base $b$ if and only if the sequence $\{b^nx\}_{n=0}^{\infty}$ is uniformly distributed mod $1$.
\end{theorem}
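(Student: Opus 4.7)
The plan is to translate between block-frequency statements (normality) and equidistribution statements (uniform distribution mod 1) via the dyadic structure of the $b$-ary expansion. The starting point is the elementary identity: if $x=0.E_1 E_2 E_3\ldots$ is the base-$b$ expansion, then for every $n\geq 0$ the fractional part $\{b^n x\}$ has base-$b$ expansion $0.E_{n+1} E_{n+2}\ldots$. Consequently, for a block $B=(d_1,\ldots,d_k)$ of length $k$ in base $b$, one has $\{b^n x\}\in I_B:=[0.d_1 d_2\ldots d_k,\ 0.d_1 d_2\ldots d_k + b^{-k})$ if and only if $B$ occurs in the base-$b$ expansion of $x$ starting at position $n+1$. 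In particular $\lambda(I_B)=b^{-k}$, and with $X=\{b^{n-1}x\}_{n=1}^{\infty}$ one has the exact identity
\[
N_N^b(B,x)=A_N(I_B,X)+O(1),
\]
where the $O(1)$ absorbs at most one boundary ambiguity at $N$.

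For the implication (UD $\Rightarrow$ normal), I would simply apply the definition of uniform distribution to the $b$-adic interval $I_B$. Since $I_B$ is of the form $[a,b)$ with rational endpoints in $[0,1)$, the definition gives $A_N(I_B,X)/N\to b^{-k}$, so the identity above yields $N_N^b(B,x)/N\to b^{-k}$. As $B$ is arbitrary, $x$ is normal in base $b$.

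For the converse (normal $\Rightarrow$ UD), the identity read in reverse shows that $A_N(I,X)/N\to \lambda(I)$ holds for every half-open $b$-adic interval $I=[j/b^k,(j+1)/b^k)$, and by finite additivity for every finite disjoint union of such intervals. To promote this to an arbitrary interval $[a,c]\subset[0,1)$, I would approximate from within and without: for each $k$ choose finite disjoint unions $U_k\subset[a,c]\subset V_k$ of level-$k$ $b$-adic intervals with $\lambda(V_k\setminus U_k)\leq 2b^{-k}$. Monotonicity in the set argument gives
\[
\frac{A_N(U_k,X)}{N}\leq \frac{A_N([a,c],X)}{N}\leq \frac{A_N(V_k,X)}{N},
\]
and letting $N\to\infty$ first and then $k\to\infty$ squeezes the middle expression to $c-a$.

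The main obstacle is the bookkeeping in the approximation step: one must choose the right endpoint conventions (half-open $b$-adic intervals) so that the indicator functions decompose cleanly, and one must verify that the pathological real numbers whose expansion terminates in $b-1$ (the ones excluded by Definition \ref{definition:1.2}) do not corrupt the identification of block occurrences with $\{b^n x\}\in I_B$. Both issues are routine once the endpoint conventions are fixed.
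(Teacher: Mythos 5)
Your argument is correct. Note that the paper itself gives no proof of this theorem: it is stated as a background fact (it is the classical theorem of Wall, proved e.g.\ in the cited book of Kuipers and Niederreiter), so there is no in-paper argument to compare against. What you write is the standard proof: the exact correspondence between occurrences of a length-$k$ block $B$ starting at position $n+1$ and membership of $\{b^n x\}$ in the $b$-adic interval $I_B$ of length $b^{-k}$ gives the implication from uniform distribution to normality immediately, and the converse follows from the squeeze by inner and outer finite unions of level-$k$ $b$-adic intervals. The two points you flag as routine really are routine: the tail condition ``$E_n\neq b-1$ infinitely often'' is inherited by every shift of the expansion, which is exactly what makes the correspondence $\{b^nx\}\in I_B$ exact (no $O(1)$ is even needed), and the passage between closed and half-open intervals in the definition of uniform distribution costs only an $\epsilon$ at each endpoint. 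The one microscopic slip is your claim that $I_B$ always has both endpoints in $[0,1)$: for $B=(b-1,\dots,b-1)$ the right endpoint is $1$, so that single interval must be handled by complementation against $[0,1-b^{-k})$; this does not affect the argument.
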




\begin{theorem}\labt{nlank}
Let $\al' < \al < 1$.  Then every $\al$-winning set is $\al'$-winning.
\end{theorem}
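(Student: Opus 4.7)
The plan is to derive Alice's strategy in the $(\al',\be')$-game from her strategy in an appropriately rescaled $(\al,\be)$-game. Fix any $\be' \in (0,1)$. I would set $\ga := \al'/\al \in (0,1)$ and $\be := \ga \be' \in (0,1)$; the point of this choice is the identity $\al' \be' = \al \be$, which will make the simulated and actual games contract at the same rate and so match radii at every Alice move. Since $S$ is $\al$-winning it is in particular $(\al,\be)$-winning, so Alice has a winning strategy $\sigma$ for the $(\al,\be)$-game.

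Next I would describe Alice's strategy for the actual $(\al',\be')$-game as an internal simulation of an $(\al,\be)$-game. Each time Bob plays an interval $B_n$ of radius $r_n$, Alice forms the concentric sub-interval $B^*_n \subset B_n$ of radius $\ga r_n$ and feeds $B^*_n$ to $\sigma$ as if it were Bob's move in the simulated game. The strategy $\sigma$ returns an interval $A^*_n \subset B^*_n$ of radius $\al \ga r_n = \al' r_n$, and Alice plays $A_n := A^*_n$ in the real game; this is legal since $A^*_n \subset B^*_n \subset B_n$ and $\rho(A_n) = \al' \rho(B_n)$. When Bob then responds with $B_{n+1} \subset A_n$ of radius $\be' \al' r_n$, Alice sets $B^*_{n+1}$ to be the concentric sub-interval of $B_{n+1}$ of radius $\ga \be' \al' r_n = \be \al' r_n = \be \rho(A^*_n)$; this is exactly the radius demanded of Bob in the simulated $(\al,\be)$-game, and $B^*_{n+1} \subset B_{n+1} \subset A_n = A^*_n$ shows that the simulated move is legal.

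To finish, the simulated sequence $(B^*_n, A^*_n)$ is a legitimate $(\al,\be)$-game played by $\sigma$, so $\bigcap_n B^*_n \subset S$. Because the intervals nest in the usual Schmidt fashion, $\bigcap_n B_n = \bigcap_n A_n = \bigcap_n A^*_n \subset \bigcap_n B^*_n \subset S$, so Alice has won the actual $(\al',\be')$-game. Since $\be' \in (0,1)$ was arbitrary, $S$ is $\al'$-winning. I do not anticipate a serious obstacle; the only delicate step is the choice of scaling: one has to notice that because $\al' < \al$ the simulated game should be obtained by \emph{shrinking} (not enlarging) Bob's intervals by the factor $\ga = \al'/\al$, and the companion choice $\be = \ga \be'$ is then forced in order that the radii of Alice's simulated and actual moves agree round by round.
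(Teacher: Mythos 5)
Your proof is correct. Note that the paper does not actually prove \reft{nlank}: it is stated as background and implicitly attributed to \cite{SchmidtGames}, so there is no in-text argument to compare against. Your simulation argument is the standard one (and essentially Schmidt's original): the key point, which you identify correctly, is to choose the shrinking factor $\al'/\al$ and the simulated parameter $\be=(\al'/\al)\be'$ so that $\al'\be'=\al\be$, which forces the radii of Alice's simulated and actual moves to agree at every round. All the legality checks are in order --- $A_n^*\subset B_n^*\subset B_n$ with $\rho(A_n^*)=\al'\rho(B_n)$ for Alice's real move, and $B_{n+1}^*\subset B_{n+1}\subset A_n=A_n^*$ with $\rho(B_{n+1}^*)=\be\,\rho(A_n^*)$ for the simulated Bob move --- and the concluding chain $\bigcap_n B_n=\bigcap_n A_n^*\subset\bigcap_n B_n^*\subset S$ is sound since the radii tend to zero.
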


\begin{theorem}\labt{blank}
The only $\al$-winning set $S \subset \mathbb{R}$ with $\al>1/2$ is $S=\mathbb{R}$.
\end{theorem}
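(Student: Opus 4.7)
The plan is a proof by contradiction. Suppose $S \subsetneq \mathbb{R}$ is $\al$-winning with $\al > 1/2$ and fix any $y \notin S$. I will exhibit a specific $\be \in (0,1)$ for which $S$ is not $\ab$-winning, by giving Bob a strategy that forces $\bigcap_{n=1}^{\infty} B_n = \{y\}$. Since being $\al$-winning requires $S$ to be $\ab$-winning for every $\be$, this contradicts the hypothesis.

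The central geometric input is that when $\al > 1/2$, Alice cannot shift her interval very far from Bob's. Writing $B_n = [c_B - r, c_B + r]$ and $A_n = [c_A - \al r, c_A + \al r]$, the containment $A_n \subset B_n$ forces $|c_A - c_B| \leq (1-\al) r$. Since $1 - \al < \al$, the old center $c_B$ lies in $A_n$; in particular, if Bob's interval is centered at $y$, then $y$ lies in $A_n$ regardless of Alice's play.

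Bob's proposed strategy is to take $B_1$ to be any closed interval centered at $y$ and, at each later turn, to pick $B_{n+1} \subset A_n$ also centered at $y$. The question is whether such a choice is legal: can an interval of radius $\be \al r$ centered at $y$ fit inside $A_n$? The containment spells out to the requirement $|c_A - y| \leq \al(1-\be) r$, and combining this with the established bound $|c_A - y| \leq (1-\al) r$ shows it suffices to choose $\be$ with $1 - \al \leq \al(1 - \be)$, i.e.\ $\be \leq 2 - 1/\al$. Since $\al > 1/2$, the right-hand side is strictly positive, so Bob can fix such a $\be$ at the outset. An easy induction then keeps every $B_n$ centered at $y$; as $\rho(B_n) = (\al\be)^{n-1} \rho(B_1) \to 0$, the nested intersection equals $\{y\}$, which is not contained in $S$. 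The only even mildly delicate step is selecting $\be$ in terms of $\al$; everything else is routine nested-interval bookkeeping, so I do not foresee a substantive obstacle.
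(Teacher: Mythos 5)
The paper states this theorem without proof (it is quoted from Schmidt's paper \cite{SchmidtGames}), so there is nothing internal to compare against; judged on its own, your argument is correct and complete, and it is in fact Schmidt's original one. Your condition $\be \leq 2 - 1/\al$ is exactly $\al\be \leq 2\al - 1$, i.e.\ $\ga = 1 + \al\be - 2\al \leq 0$ — the failure of the standing hypothesis $\ga > 0$ used throughout the paper — and your center-pinning strategy for Bob is precisely why that hypothesis is needed; the only detail worth stating explicitly is that $2 - 1/\al \in (0,1)$ because $1/2 < \al < 1$, which you have.
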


W. Schmidt proved the following in \cite{SchmidtGames}:

\begin{theorem}\labt{Schmidt1}
Let $0<\al<1$, $0<\be<1$, $\ga=1+\al\be-2\al>0$.  Let $b$ be an integer so large that $b>4/(\al\be\ga)$ and let $d$ be an integer in $[0,b-1]$.  Then the set of all real numbers whose $b$-ary expansion has finitely many occurences of the digit $d$ is $\ab$-winning.
\end{theorem}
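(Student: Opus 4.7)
My plan is to exhibit a winning strategy for Alice forcing $x=\bigcap_n B_n$ to have only finitely many digits equal to $d$ in its base-$b$ expansion. Let $\Omega_k\subset[0,1)$ denote the set of $y$ whose $k$-th base-$b$ digit is $d$; this is a disjoint union of closed intervals of length $b^{-k}$ arranged periodically with period $b^{-k+1}$, so the complementary gaps each have length $(b-1)b^{-k}$. The guiding principle is monotonicity: if at some round $n$ Alice places $A_n$ inside a single component of $[0,1)\setminus\Omega_k$, then $A_m\subset A_{m-1}\subset\cdots\subset A_n$ for every $m>n$, so the $k$-th digit of $x$ is forced to differ from $d$. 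It therefore suffices, for every sufficiently large $k$, to \emph{commit} to level $k$ at some single round.

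Write $r_n=\rho(B_n)=(\alpha\beta)^{n-1}\rho(B_1)$. I would fix a threshold $K$ and define commitment rounds $n_K<n_{K+1}<\cdots$ recursively: take $n_k$ to be the least $n>n_{k-1}$ with $2\alpha r_n\le(b-1)b^{-k}$, so that an interval of length $2\alpha r_{n_k}$ can fit geometrically inside a gap of $\Omega_k$. At round $n_k$ Alice places $A_{n_k}$ inside such a gap; at intermediate rounds $n_{k-1}<n<n_k$ she picks any valid $A_n\subset B_n$, which automatically preserves all earlier commitments because $B_n\subset A_{n-1}$. Since $r_n\to 0$ geometrically, every $n_k$ is well-defined, the sequence $(n_k)$ is strictly increasing, and every level $k\ge K$ is covered; hence $x$ has digit $d$ only at positions less than $K$, and Alice wins.

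The crux is the geometric lemma behind the commitment step: for each $k\ge K$ and each admissible $B_{n_k}\subset A_{n_k-1}$, Alice can place $A_{n_k}$ inside a single gap of $\Omega_k$. Because $A_{n_k-1}$ already lies in the gap of $\Omega_{k-1}$ committed to at round $n_{k-1}$, the level-$k$ bad intervals inside $A_{n_k-1}$ still form the usual periodic pattern, and Bob's freedom in placing $B_{n_k}$ inside $A_{n_k-1}$ amounts to a shift proportional to $1-\beta$. The hypothesis $b>4/(\alpha\beta\gamma)$ with $\gamma=1+\alpha\beta-2\alpha$ is calibrated so that two things hold simultaneously: the length $2r_{n_k}$ of $B_{n_k}$ is large enough, measured in units of $b^{-k}$, to straddle at least two consecutive bad intervals of $\Omega_k$, which forces a full gap inside $B_{n_k}$; and this full gap of length $(b-1)b^{-k}$ exceeds $2\alpha r_{n_k}$ by a margin that absorbs Bob's $(1-\beta)$-worth of shifting. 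The main obstacle will be carrying out this worst-case placement argument carefully, verifying the chain of inequalities among $(b-1)b^{-k}$, $2\alpha r_{n_k}$, the period $b^{-k+1}$, and Bob's shift — which is exactly where the sharp form of the hypothesis $b>4/(\alpha\beta\gamma)$ enters.
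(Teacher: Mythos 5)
The paper does not reprove this theorem of Schmidt directly (it cites \cite{SchmidtGames}), but its own generalization, \reft{mainwinning} via \refl{mainlemma}, proves the same kind of statement, and the comparison with that argument exposes a genuine gap in your proposal. Your strategy hinges on a \emph{single-round} commitment: at round $n_k$ Alice places $A_{n_k}$ inside one gap of $\Omega_k$. This is impossible for $\al\ge 1/2$, no matter how $n_k$ is chosen and no matter how large $b$ is. Indeed, Bob can always center $B_{n_k}$ on a bad interval of $\Omega_k$ (of length $b^{-k}$); then any subinterval of $B_{n_k}$ avoiding that bad interval and lying on one side of it has length at most $\rho(B_{n_k})-b^{-k}/2<\al\cdot 2\rho(B_{n_k})=\la(A_{n_k})$ once $\al\ge 1/2$. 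Your two calibration requirements — $2r_{n_k}$ large enough that $B_{n_k}$ is guaranteed to contain a full gap (this forces $2r_{n_k}\ge(2b-1)b^{-k}$ against an adversarial placement) and $2\al r_{n_k}\le (b-1)b^{-k}$ — jointly force $\al\le (b-1)/(2b-1)<1/2$. But the hypothesis $\ga=1+\al\be-2\al>0$ only requires $\al<1/(2-\be)$, so $\al$ may be arbitrarily close to $1$; and even the borderline case $\al=1/2$, which is exactly what is needed for \refc{SchmidtNormal}, is excluded. So the proposal proves at best a weakened statement for $\al$ strictly below $1/2$.

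The missing idea is the multi-round displacement mechanism, \refl{schmidt}: per full round Alice can guarantee a net rightward drift of the center by $(1-\al)\rho_n-\al(1-\be)\rho_n=\ga\rho_n$ against Bob's opposition, so over $t$ rounds with $(\al\be)^t<\ga/2$ she can force $B_{m+t}\subset(b_m+\rho_m\ga/2,\infty)$ while the radius shrinks only to $(\al\be)^t\rho_m>\al\be\ga\rho_m/2$. The correct shape of the argument (as in \refl{mainlemma}) is: first let the game run until $B_m$ meets \emph{at most one} bad interval of $\Omega_k$, use the hypothesis $b>4/(\al\be\ga)$ to show that this single bad interval has length less than $\ga\la(B_m)$ — this, not your one-move fitting inequality, is where the constant $4/(\al\be\ga)$ enters (compare \refl{maxabg}) — and then crawl past it over $t$ rounds via \refl{schmidt}, checking at the end that $B_{m+t}$ is still long enough to straddle two bad intervals of $\Omega_{k+1}$ so that the induction can continue. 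Your outer scheme (handle one digit position $k$ at a time, use monotonicity of the nested intervals) is sound; it is the inner commitment step that must be replaced by this multi-step push.
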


\begin{corollary}\labc{SchmidtNormal}
Let $b \geq 2$ be an integer and let $S$ be the set of numbers not normal in base $b$.  Then $S$ is a $1/2$-winning set.
\end{corollary}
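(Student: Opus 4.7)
My plan is to exhibit, for each $\beta \in (0,1)$, a $(1/2,\beta)$-winning subset of $S$. Since any superset of a winning set is winning (Alice reuses the same strategy and the intersection $\bigcap B_n$ lies in the smaller set, hence in the larger one), this shows that $S$ is $(1/2,\beta)$-winning for every $\beta$, and therefore $1/2$-winning by definition.

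The trick is to apply \reft{Schmidt1} not with base $b$, but with the larger base $b^k$ for a sufficiently large $k$. Setting $\al = 1/2$ gives $\ga = 1 + \al\be - 2\al = \be/2$, so the threshold $4/(\al\be\ga)$ in \reft{Schmidt1} equals $16/\be^2$, a quantity depending only on $\be$. I would choose $k$ large enough that $b^k > 16/\be^2$ and fix any digit $d \in \{0,1,\ldots,b^k-1\}$. Then \reft{Schmidt1} (with the roles of $\alpha,\beta$ kept as above and the base taken to be $b^k$) says that
\[ T_d := \{ x \in [0,1) : d \text{ appears only finitely often in the base-}b^k \text{ expansion of } x \} \]
is $(1/2,\be)$-winning.

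To conclude, I need $T_d \subset S$. If $x \in T_d$ then the digit $d$ has asymptotic frequency $0$ rather than $b^{-k}$ in the base-$b^k$ expansion of $x$, so $x$ fails to be simply normal in base $b^k$. By a classical theorem of Wall (see \cite{KuN}), normality in base $b$ implies simple normality in base $b^k$ for every $k \geq 1$; taking the contrapositive gives $x \notin$ (normals in base $b$), i.e.\ $x \in S$. The one non-routine ingredient here is Wall's theorem relating base-$b$ normality to simple normality in the power bases $b^k$; once that is granted, everything else is a clean application of \reft{Schmidt1} plus the trivial observation that winningness passes to supersets.
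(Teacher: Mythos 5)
Your proposal is correct and is essentially the derivation the paper (following \cite{SchmidtGames}) intends for \refc{SchmidtNormal}: for each $\be$ one applies \reft{Schmidt1} with $\al=1/2$, $\ga=\be/2$, in the power base $b^k$ with $b^k>16/\be^2$, and then uses the classical fact that normality in base $b$ implies simple normality in base $b^k$ (so the digit-$d$-avoiding set lies in $S$), together with the observation that winning sets pass to supersets. The paper's later reproof of this statement via \reft{mainwinning} likewise works in a power base $b^m$ and relies on the same containment, so your argument matches the paper's route in all essentials.
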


Based on \reft{nlank} and \reft{blank}, \refc{SchmidtNormal} is as strong as we could hope for. 

\subsection{Conventions and definitions}

The following conventions and definitions will hold for the rest of this paper.  Put
$$
S=\{ (x,y) \in (0,1)^2: 1+xy-2x>0\}.
$$
Given any $(\al,\be) \in S$, we set $\ga=\ga(\al,\be)=1+\al\be-2\al$.
If $I=(a,b)$ and $J=(c,d)$ with $c>b$, then we say that the {\it distance} between $I$ and $J$ is $c-b$.  We will use a similar definition for closed and half-open intervals.  Additionally, we will say that $I$ {\it intersects} $J$ if $I \cap J \neq \emptyset$.

Let $\mc=\{\mc_k\}_{k=1}^{\infty}$ where 
$\mc_k=\{C_{k,n}\}_{n=-\infty}^{\infty}$ is a sequence of finite disjoint intervals all separated by some positive distance where $C_{k,n-1}$ is positioned to the left of $C_{k,n}$.  Additionally, we require that every finite interval contained in $\mathbb{R}$ intersects at most finitely many members of $\mc_k$ and that for every interval $I$, there exists an integer $K_I$ such that for all $k>K_I$, $I$ intersects at least two members of $\mc_k$.
Let $\md=\{\md_k\}_{k=1}^{\infty}$ where  $\md_k=\{D_{k,n}\}_{n=-\infty}^{\infty}$ is the sequence of intervals between each adjacent member of $\mc_k$ such that $D_{k,n}$ is between $C_{k,n-1}$ and $C_{k,n}$.
Set
$$
\mw_{k,n}=\{ C_{k+1,m} : C_{k+1,m} \subset D_{k,n}\} \hbox{ and } \mv_{k,n}=\{ D_{k+1,m} : D_{k+1,m} \subset D_{k,n}\}
$$
and assume that for all $k$ and $n$, $C_{k+1,n}$ is contained in some member of $\mc_k$ or some member of $\md_k$ and $D_{k+1,n}$ is also contained in some member of $\mc_k$ or some member of $\md_k$
Then we say that $\mc$ is {\it $\ab$-friendly} if each member of $\md_k$ contains at least three members of $\mc_{k+1}$ and there
exists an integer $K_\mc$ such that for all $k > K_\mc$, integers $n$, and intervals $D \in \mv_{k,n}$, we have
\begin{equation}\labeq{friendly1}
\la(D_{k,n}) > \frac {1} {\al\be\ga} \cdot \max(\la(C_{k,n}),\la(C_{k,n+1}))
\end{equation}
and
\begin{equation}\labeq{friendly2}
\la(D) < \frac {(\al\be)^2\ga} {6} \min(\la(D_{k,n-1}),\la(D_{k,n}),\la(D_{k,n+1})).
\end{equation}

Put
$$\xfi=\left\{x \in \mathbb{R} : \left\{k \in \mathbb{N} : x \in \bigcup_{n=-\infty}^{\infty} C_{k,n} \right\} \hbox{ is finite} \right\}.$$

\begin{theorem}\labt{mainwinning}
If $\mc$ is $\ab$-friendly, then $\xfi$ is an $\ab$-winning set.
\end{theorem}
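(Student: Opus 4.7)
The plan is to adapt Schmidt's strategy from the proof of \reft{Schmidt1} to the general $\ab$-friendly setting. Alice will play so as to maintain, at the end of each round $n$, the nested invariant $A_n \subset D_{k_n, m_n} \subset D_{k_{n-1}, m_{n-1}} \subset \cdots \subset D_{k_0, m_0}$ for a non-decreasing sequence of scales $k_0 \leq k_1 \leq \cdots \to \infty$. Since $D_{k, m} \cap \bigcup_j C_{k, j} = \emptyset$ for every $k, m$, this invariant forces the unique $x \in \bigcap_n B_n$ to lie outside $\bigcup_j C_{k, j}$ for all $k \geq k_0$, placing $x \in \xfi$.

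The strategy is inductive. At round $n$ Alice attempts to \emph{advance}: she searches for some $D_{k_{n-1}+1, m} \in \mv_{k_{n-1}, m_{n-1}}$ whose intersection with $B_n$ has width at least $2\al\rho(B_n)$, and if she finds one she places $A_n$ inside that intersection and sets $(k_n, m_n) := (k_{n-1}+1, m)$. If not, she takes an arbitrary $A_n \subset B_n$ and keeps $(k_n, m_n) := (k_{n-1}, m_{n-1})$. Because $D_{k_n, m_n} \in \mv_{k_{n-1}, m_{n-1}}$ means $D_{k_n, m_n} \subset D_{k_{n-1}, m_{n-1}}$, the nesting invariant propagates automatically. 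The initial $k_0 > K_\mc$ is chosen so that $B_1$ already contains a $D_{k_0, m_0}$ large enough to host an interval of radius $\al\rho(B_1)$, which exists by the density hypothesis on $\mc$ together with \refeq{friendly1}.

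The remaining work is to show $k_n \to \infty$, for which the key geometric lemma states that advance is eventually forced: for each scale $k$ there is a threshold $\rho^*(k)$ such that whenever $\rho(A_n) \leq \rho^*(k)$ and $k_{n-1} = k$, the positional condition of the advance step is met regardless of Bob's choice of $B_n$. The argument is a case analysis on the $C_{k+1, j}$'s that $B_n$ meets -- few in number thanks to \refeq{friendly2} -- using \refeq{friendly1} at scale $k+1$ to bound each $\la(C_{k+1, j}) < \al\be\ga \la(D_{k+1, j'})$ for its adjacent $D$-gap. The required inequality, that some $D_{k+1, m}$ overlap $B_n$ in a piece of width at least $\al\la(B_n)$, ultimately reduces to the strict positivity of $\ga = 1 + \al\be - 2\al$, the very condition defining $S$; this is the same arithmetic that makes Schmidt's proof work under $b > 4/(\al\be\ga)$. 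Since $\rho(A_n)$ decays geometrically and the thresholds $\rho^*(k)$ do as well (by \refeq{friendly2}), each successive scale is reached within a bounded number of rounds, giving $k_n \to \infty$. The main obstacle is this positional case analysis, which is technically detailed but conceptually identical to Schmidt's original computation.
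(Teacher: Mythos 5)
There is a genuine gap, and it sits exactly where your proposal says the ``technically detailed but conceptually identical'' work is: the claim that the advance step is eventually forced in a \emph{single} round is false. Suppose Bob centers $B_n$ on an obstacle $C_{k+1,j}$ lying inside the current gap $D_{k,m_{n-1}}$. Then each of the two adjacent gaps $D_{k+1,j}$, $D_{k+1,j+1}$ meets $B_n$ in a set of width strictly less than $\la(B_n)/2$, so for $\al \geq 1/2$ no gap of level $k+1$ overlaps $B_n$ in width $2\al\rho(B_n)=\al\la(B_n)$ --- and this holds no matter how small $\rho(B_n)$ is, so no threshold $\rho^*(k)$ exists. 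Since every application in the paper requires $\al=1/2$ (and $S$ contains points with $\al>1/2$), this is not a boundary case one can ignore. Worse, your fallback of taking ``an arbitrary $A_n$'' on non-advance rounds hands the game to Bob: he can keep $C_{k+1,j}$ centered, shrink $B_n$ until $\la(B_n)<\la(C_{k+1,j})$, and then place $B_n$ entirely inside $C_{k+1,j}$; repeating this at each level puts the limit point in $\bigcup_n C_{k,n}$ for infinitely many $k$, i.e.\ outside $\xfi$. (Your base step has a related problem: the gaps $D_{k_0,m}$ that $B_1$ is guaranteed to contain occur only for large $k_0$, where by \refeq{friendly2} they are far too short to host an interval of radius $\al\rho(B_1)$.)

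The missing ingredient is precisely Schmidt's multi-step shifting lemma (Lemma~\ref{lemma:schmidt} in the paper): if $(\al\be)^t<\ga/2$, then over $t$ rounds Alice can force $B_{m+t}\subset(b_m+\rho_m\ga/2,\infty)$, i.e.\ she can translate the ball past any obstacle contained in $(-\infty,b_m+\rho_m\ga/2)$. This is where the positivity of $\ga=1+\al\be-2\al$ actually does its work --- as a cumulative gain over $t$ rounds, not as a one-round positional inequality. The paper also maintains an invariant opposite in flavor to yours: rather than keeping $A_n$ inside a nested gap, it keeps $B_s$ \emph{large} at the next scale, namely intersecting at least two members of $\mc_{k+1}$. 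That largeness is what makes the induction close: it forces $\la(B_m)\geq\al\be\la(D_{k,e})$ at the moment $B_m$ first meets at most one obstacle of scale $k$, whence \refeq{friendly1} gives that the single remaining obstacle has length $<\ga\la(B_m)$ (small enough for the shifting lemma), and \refeq{friendly2} gives $\la(B_s)>3\la(D)$ for every $D\in\mv_{k,e}$ after the shift, so $B_s$ again meets two members of $\mc_{k+1}$. Without the shifting lemma and without a quantitative largeness invariant at the next scale, your induction cannot be carried out.
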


Although it isn't a direct generalization of \reft{Schmidt1}, \reft{mainwinning} may be used to study a much wider variety of sets than \reft{Schmidt1}.
We use \reft{mainwinning} to prove the following:

\begin{enumerate}
\item The set of numbers not normal in some base $b$ is $1/2$-winning.  This result also follows directly from \reft{Schmidt1}.

\item (\reft{cantorwinning1} and \reft{cantorwinning3}) If $Q$ is infinite in limit, then the set of real numbers that are not $Q$-ratio normal of order $2$ and the set of numbers that are not $Q$-distribution normal are both $1/2$-winning.  If $Q$ is $1$-divergent, then the set of numbers that are not simply $Q$-normal is $1/2$-winning.  For every basic sequence $Q$ , the set of numbers that is not strongly $Q$-distribution normal is $1/2$-winning.
\end{enumerate}

\section{Proof of \reft{mainwinning}}

We will need the following lemma from \cite{SchmidtGames}:

\begin{lemma}\labl{schmidt}
Suppose that $(\al,\be) \in S$ and let the integer $t$ satisfy $(\al\be)^t < \ga/2$.  Assume that a ball $B_k$ with radius $\rho_k$ and center $b_k$ occurs in some $(\al,\be)$-play.  Then Alice can play in such a way that $B_{k+t} \subset (b_k+\rho_k \ga/2,\infty)$.
\end{lemma}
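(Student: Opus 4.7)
The plan is to give Alice an explicit ``push right'' strategy and show that after $t$ rounds the center of $B_{k+t}$ has moved so far to the right that, even after accounting for its (shrunken) radius, the whole interval $B_{k+t}$ clears the threshold $b_k + \rho_k \ga/2$.

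At each round $j = k, k+1, \ldots, k+t-1$, after Bob produces $B_j$, Alice picks $A_j$ to be the sub-ball of $B_j$ of radius $\al \rho_j$ whose right endpoint coincides with the right endpoint of $B_j$; equivalently, she sets the center of $A_j$ to be $a_j = b_j + (1-\al)\rho_j$. Bob's worst (left-most) admissible reply places $B_{j+1}$ so that its left endpoint equals that of $A_j$, and the resulting center then satisfies
$$
b_{j+1} \;\geq\; a_j - (1-\be)\al \rho_j \;=\; b_j + (1 - 2\al + \al\be)\rho_j \;=\; b_j + \ga \rho_j.
$$
Iterating this bound gives $b_{k+t} - b_k \geq \ga \sum_{i=0}^{t-1} \rho_{k+i} \geq \ga \rho_k$, where the final inequality keeps only the $i = 0$ summand (so nothing subtle is needed from the geometric series). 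Since $\rho_{k+t} = (\al\be)^t \rho_k$, the hypothesis $(\al\be)^t < \ga/2$ yields $\rho_{k+t} < \rho_k \ga/2$, and therefore the left endpoint of $B_{k+t}$ satisfies
$$
b_{k+t} - \rho_{k+t} \;>\; b_k + \ga \rho_k - \rho_k \ga/2 \;=\; b_k + \rho_k \ga/2,
$$
which is exactly $B_{k+t} \subset (b_k + \rho_k \ga/2, \infty)$.

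The only thing that needs real justification is the worst-case calculation: first, among all admissible Bob moves the left-most placement of $B_{j+1}$ minimizes $b_{j+1}$ (immediate from $B_{j+1} \subset A_j$), and second, Alice's right-most choice of $A_j$ maximizes that minimum (also immediate, since translating $A_j$ rightward by $\delta$ translates the entire admissible range for $b_{j+1}$ rightward by $\delta$). The hypothesis $(\al,\be) \in S$ is used exactly to ensure $\ga > 0$, so each round produces a strictly positive rightward shift of the center; the hypothesis $(\al\be)^t < \ga/2$ then guarantees that after $t$ rounds this accumulated shift exceeds the remaining radius by enough margin to land the entire $B_{k+t}$ strictly to the right of $b_k + \rho_k \ga/2$. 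I do not expect any serious obstacle here beyond being careful with signs and with the left/right endpoint bookkeeping.
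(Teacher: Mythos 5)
Your proof is correct. Note that the paper does not prove this lemma at all --- it is imported verbatim from Schmidt's paper \cite{SchmidtGames} --- and your argument (Alice always choosing $A_j$ flush against the right endpoint of $B_j$, so that $b_{j+1}\geq b_j+\ga\rho_j$ regardless of Bob's reply, then discarding all but the first term of the accumulated shift and using $\rho_{k+t}=(\al\be)^t\rho_k<\rho_k\ga/2$ to clear the threshold) is essentially Schmidt's original one, with the endpoint bookkeeping done correctly.
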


\begin{lemma}\labl{maxabg}
If $\ab \in S$, then $\al\be\ga<1/4$.
\end{lemma}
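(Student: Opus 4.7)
The plan is to reduce the two-variable expression $\alpha\beta\gamma$ to something depending only on the single quantity $x := \alpha\beta$, and then bound the result by a one-variable AM-GM argument.

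First I would observe that, since $\beta < 1$ and $\alpha > 0$, we have the strict inequality $\alpha\beta < \alpha$, which in turn gives $-2\alpha < -2\alpha\beta$. Substituting into the definition $\gamma = 1 + \alpha\beta - 2\alpha$ yields
\begin{equation*}
\gamma < 1 + \alpha\beta - 2\alpha\beta = 1 - \alpha\beta = 1 - x.
\end{equation*}
Combined with $\gamma > 0$ (which is the hypothesis $(\alpha,\beta) \in S$) and $x > 0$, this gives $0 < \gamma < 1 - x$.

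Multiplying by $x > 0$ and applying AM-GM (or equivalently, noting that the quadratic $x(1-x)$ attains its maximum value $1/4$ at $x = 1/2$), I obtain
\begin{equation*}
\alpha\beta\gamma = x\gamma < x(1-x) \leq \tfrac{1}{4},
\end{equation*}
which is the desired inequality. No step here looks like a real obstacle; the only thing to watch is that the inequality $\gamma < 1 - x$ is strict because $\beta < 1$ strictly, so even at the critical value $x = 1/2$ we still get a strict bound $\alpha\beta\gamma < 1/4$.
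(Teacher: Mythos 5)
Your proof is correct, but it takes a genuinely different (and more elementary) route than the paper. The paper treats $f(x,y)=xy(1+xy-2x)$ as a two-variable function on the closed square $[0,1]^2$, asserts via a ``routine computation'' that its maximum is $1/4$ and is attained \emph{only} at $(1/2,1)$, and then concludes strictness from the fact that this boundary point lies outside the open square, hence outside $S$. You instead exploit $\be<1$ directly to get the strict bound $\ga=1+\al\be-2\al<1-\al\be$, which collapses the problem to the one-variable inequality $x(1-x)\leq 1/4$ with $x=\al\be$. Your version buys two things: it replaces an unverified optimization (locating all maximizers of a quartic on a square) with a two-line algebraic estimate, and it makes the source of the strictness completely transparent --- it comes from $\be<1$, not from a delicate ``the max is attained only there'' claim. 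The paper's version, on the other hand, records the sharper structural fact that $1/4$ is the supremum of $\al\be\ga$ over $S$ and identifies where it is approached, which your argument does not address (nor does the lemma require it). One tiny point worth noting in your write-up: the inequality $x\ga < x(1-x)$ uses $x=\al\be>0$, which holds since $\al,\be\in(0,1)$; with that observed, every step is airtight.
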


\begin{proof}
Let $f(x,y)=xy(1+xy-2x)=xy+x^2y^2-2x^2y$ for $(x,y) \in [0,1]^2$.  By a routine computation, the maximum of $f(x,y)$ is $1/4$ and occurs only at $(x,y)=(1/2,1)$.  Since $S \subset [0,1]^2$ and $(1/2,1) \notin S$, $\al\be\ga < 1/4$ for $\ab \in S$.
\end{proof}

\begin{lemma}\labl{mainlemma}
Suppose that $\mc$ is $\ab$-friendly and that Bob has chosen $B_n$. If the positive integer $k>K_\mc$ is such that $B_n$ intersects at least two distinct members of $\mc_k$, then there exists an integer $s>n$ so that Alice can play such that $B_s$ intersects no members of $\mc_k$ and $B_s$ intersects at least two members of $\mc_{k+1}$.
\end{lemma}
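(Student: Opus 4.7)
The plan is to argue that Alice can steer the ball first into one of the gaps $D \in \md_k$ that is already contained in $B_n$, and then position it so that it spans some sub-gap $D_{k+1, n_0} \in \mv_{k, m}$. The core tool is iterated use of \refl{schmidt}, combined with the two size estimates \refeq{friendly1} and \refeq{friendly2}.

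First I would identify the target gap. Since $B_n$ is a connected interval meeting two distinct members $C_{k, m_1}$ and $C_{k, m_2}$ of $\mc_k$ (with $m_1 < m_2$), it necessarily contains the entire gap $D := D_{k, m_1+1}$. By \refeq{friendly1} (using $k > K_\mc$), $D$ is substantially longer than its flanking $C$-intervals, specifically $\la(D) > \frac{1}{\al\be\ga} \max(\la(C_{k, m_1}), \la(C_{k, m_1+1}))$. Next I would fix $t$ with $(\al\be)^t < \ga/2$ and iterate \refl{schmidt}. Every block of $t$ moves, Alice can shift the center of the ball in either direction by at least $\rho \ga/2$; once the ball's diameter falls below $\la(D)$, such shifts let her push the ball entirely into $D$, since any overlap with the flanking $C$-intervals can be eliminated in boundedly many pushes (their combined length being small relative to $\la(D)$).

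Once $B_j \subset D$, I would continue playing with a view to \refeq{friendly2}: every $D' \in \mv_{k, m_1+1}$ satisfies $\la(D') < \frac{(\al\be)^2 \ga}{6} \la(D)$. I would choose the stopping time $s$ so that $2\rho_s$ sits in the window between the largest sub-gap length and $\la(D)$; this window is wide, thanks to the small constant $(\al\be)^2 \ga/6$. A further application of \refl{schmidt} centers $B_s$ over some $D_{k+1, n_0} \in \mv_{k, m_1+1}$, giving $B_s \supset D_{k+1, n_0}$ while still $B_s \subset D$. Such a $B_s$ meets no member of $\mc_k$ (since $B_s \subset D$) and meets the two adjacent members $C_{k+1, n_0-1}$ and $C_{k+1, n_0}$ of $\mc_{k+1}$ that are separated by $D_{k+1, n_0}$.

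The main obstacle is the simultaneous control of three length scales—the shrinking radius $\rho_s$, the length $\la(D)$, and the maximum sub-gap length in $\mv_{k, m_1+1}$—so that a single time $s$ satisfies every requirement against an arbitrary play by Bob. The constants in \refeq{friendly1} and \refeq{friendly2} are calibrated for exactly this balance, but the careful bookkeeping of radii and positions across several iterated applications of \refl{schmidt} is where the technical effort will lie.
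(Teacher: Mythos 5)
There is a genuine gap at the step where you claim Alice can ``push the ball entirely into $D$'' for the gap $D=D_{k,m_1+1}$ that you fix at time $n$. \refl{schmidt} is only a one-sided guarantee against adversarial play: over $t$ rounds Alice can force the ball past $b+\rho\ga/2$, but the radius simultaneously drops by the factor $(\al\be)^t<\ga/2$, so the total displacement she can ever guarantee from a position of radius $\rho$ is a geometric series summing to $O(\ga\rho)$. Bob, meanwhile, controls the ball's location up to an error of order $\rho$ at every scale: while the ball shrinks from $\la(B_n)$ down toward $\la(D)$ he can park it over a different gap of $\md_k$ contained in $B_n$, or leave it straddling $C_{k,m_1}$ with most of its length over the neighbouring gap $D_{k,m_1}$. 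By the time the diameter first falls below $\la(D)$, the pre-selected target may sit at distance comparable to $\rho$ from the ball, beyond the $O(\ga\rho)$ Alice can still recover; ``boundedly many pushes'' does not help because the pushes decay geometrically together with the radius. The same objection applies to your final step: \refl{schmidt} cannot ``center'' $B_s$ over a chosen sub-gap $D_{k+1,n_0}$, it only buys a one-sided shift of $\rho\ga/2$, and each extra application shrinks the ball further.

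The paper's proof avoids this by choosing the target adaptively rather than in advance. It waits for the first (Bob-determined) time $m$ at which $B_m$ meets at most one member of $\mc_k$ while $B_{m-1}$ still met two adjacent ones, $C_{k,e-1}$ and $C_{k,e}$. At that instant $\la(B_m)\ge\al\be\la(D_{k,e})$, and \refeq{friendly1} converts this into $g<\ga\la(B_m)$ for the single remaining obstructing interval — exactly within reach of one application of \refl{schmidt}. A single push then clears all of $\mc_k$, and no further play is needed: the resulting ball has length at least $\tfrac{(\al\be)^2\ga}{2}\la(D_{k,e})$, which by \refeq{friendly2} exceeds $3$ times every sub-gap in $\mv_{k,e-1}\cup\mv_{k,e}\cup\mv_{k,e+1}$ (the minimum over three gaps in \refeq{friendly2} exists precisely because the ball may land in any of them), and together with \refeq{friendly1} and \refl{maxabg} a pigeonhole argument forces $B_s$ to meet two members of $\mc_{k+1}$ wherever Bob has left it. You identified the right three length scales, but the adaptive choice of the critical time $m$ — not a target gap fixed at time $n$ — is the missing idea that makes the balance achievable.
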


\begin{proof}
Alice and Bob can play however they want until step $m > n$ where $B_{m-1}$ has a non-empty intersection with adjacent intervals $C_{k,e-1}$ and $C_{k,e}$ in $\mc_k$, but $B_m$ intersects at most one of $C_{k,e-1}$ and $C_{k,e}$ and no other members of $\mc_k$.  
Let $g=\max(\la(C_{k,e-1}),\la(C_{k,e}))$.
 Thus, $\la(B_{m-1})\geq \la(D_{k,e})$ and 
$$
\la(B_m)\geq \al\be \la(D_{k,e}).
$$
  By \refeq{friendly1}, $\la(D_{k,e})>\frac {1} {\al\be\ga} g$ and $\la(B_m)>\al\be \left(\frac {1} {\al\be\ga} g \right)=\frac {1} {\ga} g$, so
\begin{equation}\labeq{qwerty}
g< \ga \la(B_m).
\end{equation}
Thus, Alice needs to worry about avoiding at most one interval, $C$, of length no more than $g$.  Without loss of generality,
\footnote{This is safe to do as \refeq{friendly2} will allow us control over the members of $\mv_{k,e-1}, \mv_{k,e}$, and $\mv_{k,e+1}$ and we will still be able to guarantee that $B_s$ intersects two members of $\mc_{k+1}$.}
 assume that the center of $C$ is less than or equal to the center $b_m$ of $B_m$ and that $C \subset C_{k,e-1}$.  Then by \refeq{qwerty},
$$
C \subset (-\infty,b_m+g/2] \subset (-\infty,b_m+\la(B_m)\ga/2).
$$
Let $t$ be the positive integer that satisfies $\frac {\al\be\ga} {2} \leq (\al\be)^t < \frac {\ga} {2}$ and set $s=m+t$.
By \refl{schmidt}, Alice can play in such a way that
$$
B_s \subset (b_m+\la(B_m)\ga/2,\infty),
$$
so $B_s$ does not intersect any member of $\mc_k$.  We further note that
$$
\la(B_s)=(\al\be)^t \la(B_m) \geq \frac {\al\be\ga} {2} \la(B_m)\geq \frac {(\al\be)^2\ga} {2} \la(D_{k,e}).
$$
Thus, if $D \in \mv_{k,e}$, then by \refeq{friendly2}
$$
\la(B_s) > \frac {(\al\be)^2\ga} {2} \cdot \left(\frac {6} {(\al\be)^2\ga} \la(D)  \right)=3\la(D).
$$
By \refeq{friendly1} and \refl{maxabg}, the length of each member of $\mw_{k,e}$ is less than one fourth of the length of each adjacent member of $\mv_{k,e}$.  Since the length of $B_s$ is at least $3$ times bigger than every gap between members of $\mw_{k,e}$, $B_s$ must intersect at least two members of $\mw_{k,e}$.
\end{proof}

We may now prove \reft{mainwinning}:

\begin{proof}
First, Bob will pick a closed interval $B_1$.  Since $\mc$ is $\ab$-friendly, there is a $k>K_\mc$ such that $B_1$ will intersect at least two members of $\mc_k$.  By \refl{mainlemma}, there exists a positive integer $s$ such that Alice may force some $B_s \cap C_{k,t}=\emptyset$ for all $t$.  But \refl{mainlemma} guarantees that $B_s$ will satisfy the hypotheses of \refl{mainlemma}. So Alice may force $B_r \cap C_{r,t}=\emptyset$ for all $r \geq k$ and integers $t$.  Thus, $\xfi$ is $\ab$-winning.
\end{proof}

\section{Applications}

We will need the following lemma:

\begin{lemma}\labl{halfwinning}
If $S$ is an $(\al,\be)$-winning set for all $(\al,\be) \in D$, then $S$ is $1/2$-winning.
\end{lemma}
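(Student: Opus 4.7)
The plan is essentially bookkeeping with the definitions. By the definition of $\al$-winning (stated just below the rules of the game in Section~1.1), $S$ is $1/2$-winning precisely when $S$ is $(1/2,\be)$-winning for every $\be \in (0,1)$. So the only thing I have to check is that the horizontal slice $\{\al = 1/2\} \times (0,1)$ is contained in the index set $D$, after which the conclusion is immediate from the hypothesis.

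Identifying $D$ with the region $\{(x,y)\in (0,1)^2 : 1+xy-2x>0\}$ introduced in the conventions, I evaluate the defining inequality at a generic point $(1/2,\be)$:
\[
1+\tfrac{1}{2}\be-2\cdot\tfrac{1}{2} \;=\; \tfrac{\be}{2}.
\]
This quantity is strictly positive for every $\be\in(0,1)$, so $(1/2,\be)\in D$ for all such $\be$. Applying the hypothesis at each such pair gives that $S$ is $(1/2,\be)$-winning for every $\be\in(0,1)$, which is the definition of $1/2$-winning.

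There is no genuine obstacle: the content of the lemma is the geometric observation that the line $\al=1/2$ lies inside the admissible region cut out by $\ga(\al,\be)>0$. The only thing one has to be careful about is not confusing the ``$S$'' of the conventions (a subset of $(0,1)^2$) with the ``$S$'' of the lemma statement (the winning set under study); writing $D$ for the former, as the lemma does, keeps the notation clean.
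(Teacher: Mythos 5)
Your proof is correct and is essentially identical to the paper's: both verify that $\gamma(1/2,\be)=\be/2>0$ for all $\be\in(0,1)$, so the line $\al=1/2$ lies in the admissible region, and then apply the hypothesis. Your remark about the notational clash between the region (called $S$ in the conventions but $D$ in the lemma) and the winning set $S$ is a fair observation but does not change the argument.
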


\begin{proof}
Suppose that $(1/2,\be) \in (0,1)^2$.  Then $\gamma=\be/2>0$, so $(1/2,\be) \in D$.  Thus, we may conclude that $S$ is a $1/2$-winning set.
\end{proof}

\subsection{The $b$-ary expansions}

As a warmup, we prove that the set of non-normal numbers with respect to the $b$-ary expansion is $1/2$-winning as a consequence of \reft{mainwinning}.  This result was originally found in \cite{SchmidtGames}.

\begin{theorem}\footnote{The proof of this theorem is similar to the proof of \refc{SchmidtNormal}, found in \cite{SchmidtGames}.}
Suppose that $b \geq 2$ is an integer.  Then the set of numbers that are not normal in base $b$ is $1/2$-winning
\end{theorem}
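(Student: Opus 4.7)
The plan is to apply \reft{mainwinning}: for each $(\al, \be) \in S$ I will build a family $\mc$ whose set $\xfi$ is contained in the set of $x$ not normal in base $b$, and then conclude via \refl{halfwinning}. Fix $(\al, \be) \in S$ and $\ga = 1 + \al\be - 2\al$. Choose $r \geq 1$ so that $B := b^r$ satisfies $B \geq 4$, $B - 1 > 1/(\al\be\ga)$, and $1/B < (\al\be)^2 \ga / 6$; all three inequalities hold for $r$ large. For each $k \geq 1$ and $n \in \mathbb{Z}$, define
$$C_{k, n} = \bigl(nB^{1-k},\, nB^{1-k} + B^{-k}\bigr),$$
the open interval of reals whose $k$-th base-$B$ digit (of the fractional part) equals $0$ and whose preceding $k-1$ digits encode $n$. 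Let $D_{k, n}$ be the open gap between $C_{k, n-1}$ and $C_{k, n}$, of length $(B-1)B^{-k}$.

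The first step is to verify the structural hypotheses (disjointness with positive gaps, local finiteness, and that for large $k$ the family $\mc_k$ eventually puts two members inside any fixed interval) together with the containment property that each $C_{k+1, n}$ and each $D_{k+1, n}$ lies in a single member of $\mc_k \cup \md_k$. Next I would check the three $(\al, \be)$-friendliness conditions, which become purely arithmetic since $\la(C_{k, n}) \equiv B^{-k}$ and $\la(D_{k, n}) \equiv (B-1)B^{-k}$: each $D_{k, n}$ contains the $B - 1 \geq 3$ intervals $C_{k+1, (n-1)B + j}$ for $j = 1, \ldots, B - 1$; $\la(D_{k, n})/\la(C_{k, n}) = B - 1 > 1/(\al\be\ga)$; and for any $D \in \mv_{k, n}$ one has $\la(D)/\la(D_{k, n}) = 1/B < (\al\be)^2\ga/6$. \reft{mainwinning} then yields that $\xfi$ is $(\al, \be)$-winning.

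By construction, $\xfi$ consists of reals whose base-$B$ expansion has digit $0$ only finitely often; these are not normal in base $B = b^r$, and hence not normal in base $b$ by the classical equivalence of normality in bases $b$ and $b^r$. Thus $\xfi$ is contained in the non-normal-in-base-$b$ set, and since a winning strategy for a subset also wins for any superset, the latter set is $(\al, \be)$-winning for every $(\al, \be) \in S$; \refl{halfwinning} then delivers $1/2$-winning. The point requiring the most care is the containment of level-$(k+1)$ intervals in single level-$k$ members: for a general choice of target digit $d \in \{1, \ldots, B - 2\}$ the gaps $D_{k+1, \cdot}$ straddle the boundary between adjacent $C_{k, \cdot}$ and $D_{k, \cdot}$, violating the hypothesis of \reft{mainwinning}. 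Taking the target digit to be $0$ aligns the left endpoint of $C_{k, m}$ with the grid point $mB^{1-k}$, which is also the left endpoint of $C_{k+1, mB}$, so the level-$(k+1)$ intervals nest cleanly inside the level-$k$ ones.
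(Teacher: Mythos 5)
Your proposal is correct and follows essentially the same route as the paper: the paper also fixes $(\al,\be)\in S$, passes to the power base $\eta=b^m$ with $b^m>6/((\al\be)^2\ga)$, takes $\mc_k$ to be the preimages $T_\eta^{-k}([0,1/\eta))$ (i.e.\ the set where the $k$-th base-$\eta$ digit is $0$, exactly your $C_{k,n}$ up to an index shift and open/half-open endpoints), verifies \refeq{friendly1} and \refeq{friendly2} by the same length computations, and concludes via \reft{mainwinning} and \refl{halfwinning}. Your added remarks on the nesting of level-$(k{+}1)$ intervals and on the equivalence of normality in bases $b$ and $b^r$ only make explicit points the paper leaves implicit.
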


\begin{proof}
Let $\ab \in S$ and let $m$ be large enough so that $b^m>\frac {6} {(\al\be)^2\ga}$. Put $\eta=b^m$.
Define $T_\eta:\mathbb{R} \to [0,1)$ by $T_\eta x= \eta x \pmod{1}$.  Note that for $I=[c,d) \subset [0,1)$,
\begin{equation}\labeq{Ckbary}
T_\eta^{-k} (I)=\bigcup_{n=-\infty}^{\infty} \left[ \frac {n+c} {\eta^k},\frac {n+d} {\eta^k} \right).
\end{equation}
Set $I=[0,1/\eta)$, so $\la(I)=1/\eta < \frac {(\al\be)^2\ga} {6}< \frac {\al\be\ga} {1+\al\be\ga}$.
Let $\mc_k$ consist of the intervals in \refeq{Ckbary} and put $\mc=\{\mc_k\}$.  The intervals in \refeq{Ckbary} all have length of $\eta^{-k} \la(I)$ and are separated by a distance of $\eta^{-k} (1-\la(I))$.  Thus, for all $k$ and $n$, $\la(C_{k,n})=\eta^{-k} \la(I)$ and $\la(D_{k,n})=\eta^{-k} (1-\la(I))$, so \refeq{friendly1} and \refeq{friendly2} both hold.  So, $\xfi$ is $\ab$-winning.
Since $\ab \in S$ was arbitrary and $\xfi$ is always contained in the set of numbers not normal in base $b$, this set is $1/2$-winning by \refl{halfwinning}.
\end{proof}

\subsection{The Cantor series expansion}
\label{sec:4}

The $Q$-Cantor series expansion, first studied by Georg Cantor in \cite{Cantor}, is a natural generalization of the $b$-ary expansion.

\begin{definition}\labd{1.4} $Q=\{q_n\}_{n=1}^{\infty}$ is a {\it basic sequence} if each $q_n$ is an integer greater than or equal to $2$.
\end{definition}

\begin{definition}\labd{1.5} Given a basic sequence $Q$, the {\it $Q$-Cantor series expansion} of a real number $x$ is the (unique)\footnote{Uniqueness can be proven in the same way as for the $b$-ary expansion.} expansion of the form
\begin{equation} \label{eqn:cseries}
x=\floor{x}+\sum_{n=1}^{\infty} \frac {E_n} {q_1 q_2 \ldots q_n}
\end{equation}
such that $E_n$ is in $\{0,1,\ldots,q_n-1\}$ for all $n$
with $E_n \neq q_n-1$ infinitely often.
\end{definition}

Clearly, the $b$-ary expansion is a special case of \refeq{cseries} where $q_n=b$ for all $n$.  If one thinks of a $b$-ary expansion as representing an outcome of repeatedly rolling a fair $b$-sided die, then a $Q$-Cantor series expansion may be thought of as representing an outcome of rolling a fair $q_1$ sided die, followed by a fair $q_2$ sided die and so on.  For example, if $q_n=n+1$ for all $n$, then the $Q$-Cantor series expansion of $e-2$ is
$$
e=2+\frac{1} {2}+\frac{1} {2 \cdot 3}+\frac{1} {2 \cdot 3 \cdot 4}+\ldots
$$
If $q_n=10$ for all $n$, then the $Q$-Cantor series expansion for $1/4$ is
$$
\frac {1} {4}=\frac{2} {10}+\frac {5} {10^2}+\frac {0} {10^3}+\frac {0} {10^4}+\ldots
$$

For a given basic sequence $Q$, let $N_n^Q(B,x)$ denote the number of times a block $B$ occurs starting at a position no greater than $n$ in the $Q$-Cantor series expansion of $x$. Additionally, define
$$
Q_n^{(k)}=\sum_{j=1}^n \frac {1} {q_j q_{j+1} \ldots q_{j+k-1}}.
$$

\begin{definition}\labd{1.7} A real number $x$ is {\it $Q$-normal of order $k$} if for all blocks $B$ of length $k$,
$$
\lim_{n \rightarrow \infty} \frac {N_n^Q (B,x)} {Q_n^{(k)}}=1.
$$
We say that $x$ is {\it $Q$-normal} if it is $Q$-normal of order $k$ for all $k$.
A real number $x$ is {\it $Q$-ratio normal} of order $k$ if for all blocks $B$ and $B'$ of length $k$, we have
$$
\lim_{n \to \infty} \frac {N_n^Q(B,x)} {N_n^Q(B',x)}=1.
$$
$x$ is {\it $Q$-ratio normal} if it is $Q$-ratio normal of order $k$ for all positive integers $k$.  $x$ is {\it simply $Q$-normal} if it is $Q$-normal of order $1$ and {\it simply $Q$-ratio normal} if it is $Q$-ratio normal of order $1$.
\end{definition}

Let~$x$ be a real number and let~$Q$ be a basic sequence.
Define the $1$-periodic function $T_{Q,n}:\mathbb{R} \to [0,1)$ by 
$T_{Q,n}(x)=q_1\cdots q_n x \pmod{1}$.  Given a basic sequence $Q$ and an interval $I \subset [0,1)$, we let $\mc_{Q,I,k}$ consist of the intervals of $T_{Q,k}^{-1}(I)$ and put $\mc_{Q,I}=\{\mc_{Q,I,k}\}_{k=1}^{\infty}$.

\begin{definition}\labd{distributionnormal}
A real number~$x$ is {\it $Q$-distribution normal} if
the sequence $\{T_{Q,n}(x)\}_{n=1}^\infty$ is uniformly distributed mod $1$. $x$ is strongly $Q$-distribution normal if for all positive integers $k$ and $p \in [1,k]$, the sequence $\left\{ T_{Q,kn+p}(x)\right\}_{n=1}^\infty$ is uniformly distributed mod $1$.\footnote{All strongly $Q$-distribution normal numbers are also $Q$-distribution normal.  This follows as the superposition of a finite number of sequences that are uniformly distributed mod $1$ is also uniformly distributed mod $1$ (see \cite{KuN}).  For some basic sequences $Q$, there exist numbers that are $Q$-distribution normal, but not strongly $Q$-distribution normal. For example, put $E=(0,1,0,2,1,3,0,3,1,4,2,5,\ldots)$, $Q=(2,2,4,4,4,4,6,6,6,6,6,6,\ldots)$, and $x=\formalsum$.  Then $x$ is $Q$-distribution normal, but not strongly $Q$-distribution normal as $T_{Q,2n}(x)>1/2$ for all $n$.}
\end{definition}

\begin{definition}\labd{1.8} A basic sequence $Q$ is {\it $k$-divergent} if
$\lim_{n \rightarrow \infty} Q_n^{(k)}=\infty$.
$Q$ is {\it fully divergent} if $Q$ is $k$-divergent for all $k$.
$Q$ is {\it infinite in limit} if $q_n \rightarrow \infty$.
\end{definition}

For $Q$ that are infinite in limit,
it has been shown that the set of all real numbers $x$ that are $Q$-normal of order $k$ has full Lebesgue measure if and only if $Q$ is $k$-divergent \cite{Renyi}.  Therefore, if $Q$ is infinite in limit, then the set of all $x$ that are $Q$-normal has full Lebesgue measure if and only if $Q$ is fully divergent.   We need the following from \cite{KuN}:

\begin{theorem}\labt{weyl}
Let $\{a_n\}_{n=1}^{\infty}$ be a given sequence of distinct integers.  Then the sequence $\{a_nx\}_{n=1}^\infty$ is uniformly distributed mod $1$ for almost all real numbers $x$.
\end{theorem}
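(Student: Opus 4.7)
The plan is to apply Weyl's equidistribution criterion and reduce the statement to an almost-everywhere estimate on exponential sums, which will be controlled by a standard second-moment argument. Recall that $\{a_n x\}_{n=1}^{\infty}$ is uniformly distributed mod~$1$ if and only if, for every nonzero integer $h$,
\begin{equation*}
\lim_{N \to \infty} \frac{1}{N} \sum_{n=1}^{N} e^{2\pi i h a_n x} = 0.
\end{equation*}
Since there are only countably many such $h$, it suffices to prove, for each fixed nonzero integer $h$, that the above limit holds for almost every $x$, and then intersect the resulting full-measure sets. Because the $a_n$ are distinct and $h \neq 0$, the integers $b_n := h a_n$ are also pairwise distinct, so the problem reduces to showing that for any sequence of distinct integers $\{b_n\}_{n=1}^{\infty}$ one has
\begin{equation*}
S_N(x) := \sum_{n=1}^{N} e^{2\pi i b_n x}, \qquad \frac{S_N(x)}{N} \to 0 \text{ for a.e. } x \in [0,1].
\end{equation*}

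The core computation is a second-moment estimate. Expanding the square and using orthogonality of characters,
\begin{equation*}
\int_0^1 |S_N(x)|^2 \, dx = \sum_{n=1}^{N}\sum_{m=1}^{N} \int_0^1 e^{2\pi i (b_n - b_m) x} \, dx = N,
\end{equation*}
since $b_n \neq b_m$ for $n \neq m$ kills all off-diagonal terms. Dividing by $N^2$ yields $\int_0^1 |S_N(x)/N|^2\, dx = 1/N$. Taking the subsequence $N_k = k^2$, this gives $\sum_{k=1}^{\infty}\int_0^1 |S_{N_k}(x)/N_k|^2\, dx = \sum_k 1/k^2 < \infty$, so by monotone convergence $\sum_k |S_{N_k}(x)/N_k|^2 < \infty$ and hence $S_{N_k}(x)/N_k \to 0$ for almost every $x$.

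It remains to interpolate between the values $N_k$ and $N_{k+1}$, and this is the only mildly delicate step. For $N_k \leq N \leq N_{k+1}$, the trivial bound
\begin{equation*}
|S_N(x) - S_{N_k}(x)| \leq N_{k+1} - N_k = 2k+1
\end{equation*}
combined with $N \geq N_k = k^2$ gives
\begin{equation*}
\left| \frac{S_N(x)}{N} - \frac{S_{N_k}(x)}{N_k} \right| \leq \frac{2k+1}{k^2} + \left(\frac{1}{N_k} - \frac{1}{N}\right)|S_{N_k}(x)| = O\!\left(\frac{1}{k}\right) + o(1),
\end{equation*}
the second term being $o(1)$ along the a.e.\ set where $S_{N_k}/N_k \to 0$. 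Hence $S_N(x)/N \to 0$ for almost every $x$. Intersecting over $h \in \mathbb{Z} \setminus \{0\}$ preserves a full-measure set, and Weyl's criterion yields uniform distribution mod~$1$ for almost every $x$.
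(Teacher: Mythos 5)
The paper does not prove this statement at all --- it is quoted as a known result from Kuipers--Niederreiter (it is Weyl's classical metric theorem), so there is no internal proof to compare against. Your argument is correct and is in fact the standard proof found in that reference: Weyl's criterion reduces the claim to a single exponential sum for each fixed nonzero $h$, the distinctness of the $b_n = h a_n$ gives the exact second moment $\int_0^1 |S_N|^2\,dx = N$, Chebyshev/Borel--Cantelli along $N_k = k^2$ gives a.e.\ convergence on the subsequence, and the trivial bound $|S_N - S_{N_k}| \leq 2k+1$ handles the interpolation. All steps check out, including the slightly delicate splitting of $\frac{S_N}{N} - \frac{S_{N_k}}{N_k}$ into an $O(1/k)$ term and a term controlled by $|S_{N_k}|/N_k$.
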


The typicality of strongly $Q$-distribution normal numbers follows from \reft{weyl}. Clearly, all numbers that are $Q$-normal of order $k$ are also $Q$-ratio normal of order $k$.  However, unlike the $b$-ary expansion, neither $Q$-normality or $Q$-distribution normality imply each other (see \cite{AlMa} for explicit constructions).

\begin{lemma}\labl{basicseqfriendly}
Suppose that $\ab \in S$ and $Q$ is a basic sequence such that $q_{k}>\frac {6} {(\al\be)^2 \ga}$ for large enough $k$ and that  $I=[a,b) \subset [0,1)$ with $\la(I)<\frac {\al\be\ga} {1+\al\be\ga}$.  Then $\mc_{Q,I}$ is $\ab$-friendly.
\end{lemma}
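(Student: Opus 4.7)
The plan is to read off all interval lengths from the explicit periodic description
\[
T_{Q,k}^{-1}(I) = \bigcup_{n \in \mathbb{Z}}\left[\frac{n+a}{q_1\cdots q_k}, \frac{n+b}{q_1\cdots q_k}\right)
\]
and then verify each clause of the definition of $\ab$-friendly by direct computation. The formula gives $\la(C_{k,n}) = \la(I)/(q_1\cdots q_k)$ and $\la(D_{k,n}) = (1-\la(I))/(q_1\cdots q_k)$ for every $n$, so all $C_{k,n}$ share a common length and likewise all $D_{k,n}$. Since $q_n \geq 2$, the period $1/(q_1\cdots q_k)$ is positive and tends to $0$, which hands over the basic structural requirements on $\mc$: positive separation between consecutive members, finite intersection with any bounded interval, and intersection with at least two members once $k$ is large.

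Next I would verify the two scaling inequalities. For \refeq{friendly1}, the constant ratio $\la(D_{k,n})/\la(C_{k,n}) = (1-\la(I))/\la(I)$ reduces the inequality to $(1-\la(I))/\la(I) > 1/(\al\be\ga)$, which rearranges to $\la(I) < \al\be\ga/(1+\al\be\ga)$---exactly the hypothesis on $I$. For \refeq{friendly2}, any $D \in \mv_{k,n}$ has length $(1-\la(I))/(q_1\cdots q_{k+1})$ while each nearby $D_{k,\cdot}$ has length $(1-\la(I))/(q_1\cdots q_k)$, reducing the inequality to $1/q_{k+1} < (\al\be)^2\ga/6$, i.e.\ $q_{k+1} > 6/((\al\be)^2\ga)$; this holds for $k$ beyond some threshold by hypothesis, and I would set $K_\mc$ equal to that threshold. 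The three-member clause is similar in spirit: $D_{k,n}$ contains approximately $(1-\la(I))q_{k+1}$ complete $C_{k+1,m}$'s, and by \refl{maxabg} one has $\al\be\ga < 1/4$, forcing $\la(I) < 1/5$ and (for $k > K_\mc$) $q_{k+1} > 24$, so the count exceeds $15$, well above three.

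The main obstacle will be checking the containment assumption in the setup, namely that each $C_{k+1,m}$ and each $D_{k+1,m}$ must sit in a single member of $\mc_k \cup \md_k$ rather than straddle the boundary between two level-$k$ cells. I would handle this by noting that the level-$(k+1)$ period $1/(q_1\cdots q_{k+1})$ evenly subdivides the level-$k$ period into $q_{k+1}$ equal pieces, so that for $q_{k+1}$ large each level-$(k+1)$ cell is much shorter than either $\la(C_{k,n})$ or $\la(D_{k,n})$; at most $O(1)$ level-$(k+1)$ cells per level-$k$ period can straddle a level-$k$ boundary, and these exceptions can be absorbed by enlarging $K_\mc$ without breaking any of the earlier counts (the three-member count retains ample slack). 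Once $K_\mc$ is chosen large enough to meet all the asymptotic conditions simultaneously, every clause of $\ab$-friendly is verified and $\mc_{Q,I}$ is $\ab$-friendly.
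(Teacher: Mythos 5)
Your proposal is correct and takes essentially the same route as the paper: both read off the constant lengths $\la(C_{k,n})=\la(I)/(q_1\cdots q_k)$ and $\la(D_{k,n})=(1-\la(I))/(q_1\cdots q_k)$ from the explicit preimage formula, reduce \refeq{friendly1} to the hypothesis $\la(I)<\al\be\ga/(1+\al\be\ga)$, and reduce \refeq{friendly2} to $q_{k+1}>6/((\al\be)^2\ga)$ for $k$ past a threshold. Your extra verification of the structural clauses (the three-members-per-gap count and the containment of level-$(k+1)$ cells in level-$k$ cells) goes beyond what the paper records but does not change the argument.
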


\begin{proof}
Note that
$$
T_k^{-1}(I)=\bigcup_{n=-\infty}^{\infty} \left[ \frac {n+a} {\qk}, \frac {n+b} {\qk} \right)
$$
is the union of intervals of length $\la(I) \cdot \frac {1} {\qk}$ separated by a distance of $(1-\la(I)) \cdot \frac {1} {\qk}$.  Let $K$ be large enough such that for $k \geq K$, we have $q_{k+1} > \frac {6} {(\al\be)^2\ga}$.  Then
$$
\frac {1} {q_1q_2 \cdots q_{k+1}} (1-\la(I))=\frac {1} {\qk} \cdot \frac {1} {q_{k+1}} \cdot (1-\la(I)) < \frac {(\al\be)^2\ga} {6} \cdot \frac {1} {\qk} \cdot (1-\la(I)),
$$
so \refeq{friendly2} holds. Next, we see that
$$
(1-\la(I)) \cdot \frac {1} {\qk} > \left(1-\frac {\al\be\ga} {1+\al\be\ga} \right) \frac {1} {\qk}=\frac {1} {1+\al\be\ga} \cdot \frac {1} {\qk}
$$
$$
=\frac {1} {\al\be\ga} \cdot  \frac {\al\be\ga} {1+\al\be\ga}  \cdot \frac {1} {\qk}>\frac {1} {\al\be\ga} \cdot \la(I) \cdot \frac {1} {\qk},
$$
so \refeq{friendly1} holds and $\mc_{Q,I}$ is $\ab$-friendly.
\end{proof}

\begin{lemma}\labl{Qinfblocks}
If $Q$ is infinite in limit, $x$ is $Q$-ratio normal of order $2$, and $t$ is a non-negative integer, then
$$\lim_{n \to \infty} N_{n}^Q((t),x)=\infty.$$
\end{lemma}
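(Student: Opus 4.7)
The plan is to argue by contradiction: assume $N_n^Q((t),x)$ is bounded, with $\sup_n N_n^Q((t),x) = M < \infty$, and derive a contradiction by comparing the length-$1$ count with an infinite sum of length-$2$ counts.

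The key identity is
$$N_n^Q((t),x) \;=\; \sum_{s=0}^{\infty} N_n^Q((t,s),x),$$
which holds because each position $j \leq n$ with $E_j = t$ contributes to exactly one summand on the right, namely the one indexed by $s = E_{j+1}$. Under the contradiction hypothesis, the left-hand side is at most $M$ for every $n$.

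Next I would argue that $Q$-ratio normality of order $2$ forces $N_n^Q(B,x) > 0$ for all sufficiently large $n$ and every length-$2$ block $B$. If some length-$2$ block $B_0$ had $N_n^Q(B_0,x) = 0$ for all $n$, then the ratio $N_n^Q(B,x)/N_n^Q(B_0,x)$ in the definition of ratio normality would be undefined for every other block $B$, and in particular could not tend to $1$. Because $Q$ is infinite in limit, for each $s \geq 0$ we eventually have $t < q_j$ and $s < q_{j+1}$, so $(t,s)$ is a genuine length-$2$ block and the above applies to $B = (t,s)$ for every $s$.

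With both facts in hand, I would choose $M+1$ distinct non-negative integers $s_0,\ldots,s_M$ and, using the positivity just established together with the monotonicity of $n \mapsto N_n^Q((t,s_i),x)$, pick $N$ large enough that $N_n^Q((t,s_i),x) \geq 1$ for every $i \in \{0,\ldots,M\}$ and every $n \geq N$. Substituting into the identity yields $N_n^Q((t),x) \geq M+1$ for such $n$, contradicting the assumed bound $M$. The main conceptual point requiring care is verifying that ratio normality of order $2$ implies positivity of every length-$2$ block count; once that is granted, the counting argument is routine.
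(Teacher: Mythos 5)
Your proof is correct and follows essentially the same route as the paper's: both deduce from $Q$-ratio normality of order $2$ that every block $(t,s)$ must occur at least once (otherwise the defining ratios could not tend to $1$), and then use the infinitude of admissible second digits $s$, guaranteed because $Q$ is infinite in limit, to force infinitely many occurrences of the digit $t$. Your contradiction framing and the identity $N_n^Q((t),x)=\sum_{s}N_n^Q((t,s),x)$ simply make explicit the counting step the paper leaves implicit.
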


\begin{proof}
Since $Q$ is infinite in limit and $x$ is $Q$-ratio normal of order $2$, for all $i,j \geq 0$, we have
$$
\lim_{n \to \infty} \frac {N_{n}^Q((t,i),x)} {N_{n}^Q((t,j),x)}=1.
$$
So, for all $j$ there is an $n$ such that $N_{n}^Q((t,j),x) \geq 1$.  Since there are infinitely many choices for $j$, the lemma follows.
\end{proof}

Given $\ab \in S$, put $I_{\ab}=\left[0,\frac {\al\be\ga} {1+\al\be\ga} \right)$.
Let $P_Q$ be the set of real numbers whose $Q$-Cantor series expansion contains finitely many copies of the digit $0$.

\begin{lemma}\labl{pq}
If $Q$ is infinite in limit, then $P_Q$ is $1/2$-winning.
\end{lemma}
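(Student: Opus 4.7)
The plan is to apply \reft{mainwinning} to a suitable family $\mc_{Q,I}$ and then pass from ``$\ab$-winning for every $\ab\in S$'' to ``$1/2$-winning'' using \refl{halfwinning}. The key observation is that $E_{k+1}(x)=0$ in the $Q$-Cantor series expansion of $x$ precisely when $T_{Q,k}(x)\in [0,1/q_{k+1})$, so in order to guarantee finitely many zero digits it suffices to arrange that $T_{Q,k}(x)$ eventually avoids some fixed interval $[0,\delta)$, provided $\delta$ is large enough to eventually dominate $1/q_{k+1}$.

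Fix $\ab\in S$ and choose any $\delta$ with $0<\delta<\frac{\al\be\ga}{1+\al\be\ga}$; set $I=[0,\delta)$. Because $Q$ is infinite in limit, both of the following hold for all sufficiently large $k$: $q_k>6/((\al\be)^2\ga)$, so that the hypotheses of \refl{basicseqfriendly} are met and $\mc_{Q,I}$ is $\ab$-friendly; and $1/q_{k+1}<\delta$, so that $[0,1/q_{k+1})\subset I$. By \reft{mainwinning}, $\mathscr{X}_{\mc_{Q,I}}$ is then $\ab$-winning.

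It remains to verify the inclusion $\mathscr{X}_{\mc_{Q,I}}\subset P_Q$. If $x\in\mathscr{X}_{\mc_{Q,I}}$, then by definition $T_{Q,k}(x)\notin I$ for all but finitely many $k$; combined with the eventual inclusion $[0,1/q_{k+1})\subset I$, this forces $T_{Q,k}(x)\notin [0,1/q_{k+1})$, i.e.\ $E_{k+1}(x)\neq 0$, for all but finitely many $k$. Hence $x\in P_Q$, so $P_Q\supset\mathscr{X}_{\mc_{Q,I}}$ is $\ab$-winning. Since $\ab\in S$ was arbitrary, \refl{halfwinning} gives that $P_Q$ is $1/2$-winning.

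No serious obstacle is anticipated: the argument is essentially a direct specialization of the machinery already developed. The only point demanding care is the strict inequality $\la(I)<\frac{\al\be\ga}{1+\al\be\ga}$ in the hypothesis of \refl{basicseqfriendly}, which forbids taking $I=I_{\ab}$ itself; this is harmlessly circumvented by any strictly smaller $\delta$, which is still eventually larger than $1/q_{k+1}$ because $q_k\to\infty$.
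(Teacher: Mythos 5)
Your proof is correct and follows essentially the same route as the paper: apply \refl{basicseqfriendly} and \reft{mainwinning} to an interval $I=[0,\delta)$ at the origin, note that a zero digit $E_{k+1}$ forces $T_{Q,k}(x)\in[0,1/q_{k+1})\subset I$ for large $k$, and finish with \refl{halfwinning}. The one difference is that the paper takes $I=I_{\ab}$ with $\la(I_{\ab})=\frac{\al\be\ga}{1+\al\be\ga}$ exactly, which technically violates the strict inequality in the hypothesis of \refl{basicseqfriendly}; your choice of a strictly smaller $\delta$ quietly repairs this minor slip without changing the argument.
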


\begin{proof}
Let $\ab \in S$ and $K$ large enough so that $q_k > \frac {6} {(\al\be)^2\ga}$ for $k>K$.  Note that $\frac {1} {q_k} < \la(I_{\ab})$, so if the $j^{th}$ digit of some real number $x$ is $0$, then $T_{Q,j-1}(x) \in I_{\ab}$.  But $\mc_{Q,I_{\ab}}$ is $\ab$-friendly and $\ab \in S$ was arbitrary, so $P_Q$ is $1/2$ winning by \reft{mainwinning}, \refl{halfwinning}, and \refl{basicseqfriendly}.
\end{proof}

\begin{lemma}\labl{pqnn}
If $Q$ is infinite in limit, then $P_Q$ is contained in the set of real numbers that are not $Q$-ratio normal of order $2$.
\end{lemma}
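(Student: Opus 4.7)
The plan is to apply the contrapositive of \refl{Qinfblocks} with $t=0$. Suppose $x \in P_Q$. Then by the definition of $P_Q$, the digit $0$ appears only finitely many times in the $Q$-Cantor series expansion of $x$, so there exists a constant $M$ with $N_n^Q((0),x) \leq M$ for all $n$. In particular,
$$
\lim_{n \to \infty} N_n^Q((0),x) \neq \infty.
$$
Since $Q$ is infinite in limit, \refl{Qinfblocks} asserts that any $Q$-ratio normal number of order $2$ must satisfy $\lim_{n\to\infty} N_n^Q((0),x) = \infty$. Hence $x$ is not $Q$-ratio normal of order $2$, which is exactly what we need.

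This reduction is immediate and there is no real obstacle; the content of the lemma lives entirely in \refl{Qinfblocks}. The only thing to verify is that the hypothesis "$Q$ infinite in limit" carries over, which it does by assumption, so the argument is a one-line invocation of the contrapositive.
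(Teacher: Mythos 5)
Your proof is correct and rests on the same key ingredient as the paper's, namely \refl{Qinfblocks} applied with $t=0$. If anything, your direct contrapositive is cleaner than the paper's argument, which splits into two cases according to whether some other digit occurs a different number of times than $0$ and only invokes \refl{Qinfblocks} in the second case; your version shows the case distinction is unnecessary, since the boundedness of $N_n^Q((0),x)$ for $x \in P_Q$ already contradicts the conclusion of that lemma for any $x$ that were $Q$-ratio normal of order $2$.
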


\begin{proof}
If $x \in P_Q$, then the digit $0$ occurs finitely often.  If there is another digit that occurs a different number of times, then $x$ is not even simply $Q$-ratio normal.  If every digit in the $Q$-Cantor series expansion of $x$ occurs the same number of times, then $x$ is not $Q$-ratio normal of order $2$ by \refl{Qinfblocks}.
\end{proof}

\begin{theorem}\labt{cantorwinning1}
Suppose that $Q$ is infinite in limit. Then the set of numbers that are not $Q$-ratio normal of order $2$ is a $1/2$-winning set.  
The set of real numbers that are not $Q$-distribution normal is $1/2$-winning.
If $Q$ is $1$-divergent, then the set of numbers that are not simply $Q$-normal is $1/2$-winning.
\end{theorem}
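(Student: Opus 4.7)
The plan is, for each of the three claims, to exhibit for each $\ab \in S$ a concrete $\ab$-winning subset of the corresponding set of non-normal numbers and then invoke \refl{halfwinning}. The first claim (not $Q$-ratio normal of order $2$) follows immediately from chaining \refl{pq} and \refl{pqnn}: the former gives that $P_Q$ is $1/2$-winning under the infinite-in-limit hypothesis, and the latter gives $P_Q \subset \{x : x \text{ is not } Q\text{-ratio normal of order } 2\}$.

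For the second claim (not $Q$-distribution normal), fix $\ab \in S$ and take $I = I_{\ab}$. By \refl{basicseqfriendly} and \reft{mainwinning}, $\mathscr{X}_{\mathscr{C}_{Q,I}}$ is $\ab$-winning. I would observe that if $x \in \mathscr{X}_{\mathscr{C}_{Q,I}}$ then $T_{Q,k}(x) \in I$ for only finitely many $k$, so the sequence $\{T_{Q,n}(x)\}_{n=1}^\infty$ visits $I$ with asymptotic density $0$ rather than $\la(I) > 0$; hence that sequence is not uniformly distributed modulo $1$ and $x$ is not $Q$-distribution normal. Since $\ab \in S$ was arbitrary, \refl{halfwinning} yields the $1/2$-winning conclusion.

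For the third claim, observe that whenever $Q$ is $1$-divergent we have $P_Q \subset \{x : x \text{ is not simply } Q\text{-normal}\}$: for $x \in P_Q$, $N_n^Q((0), x)$ is bounded while $Q_n^{(1)} \to \infty$, so $N_n^Q((0),x)/Q_n^{(1)} \to 0 \neq 1$. When $Q$ is also infinite in limit, \refl{pq} directly gives that $P_Q$ is $1/2$-winning and we are done. The main obstacle is the $1$-divergent but not infinite in limit case (e.g., $Q = (2,2,\ldots)$), where $P_Q$ can be empty by the uniqueness convention in \refd{1.5}, so \refl{pq} is not directly usable. To handle this, for each $\ab \in S$ I would partition $\mathbb{N}$ into consecutive blocks $B_k$ of fixed length $m$ chosen large enough that $q'_k := \prod_{j \in B_k} q_j > 6/((\al\be)^2 \ga)$ for all large $k$. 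For each tuple $P$ in a finite family $\mathcal{B}$ of ``too-many-zero'' patterns, the set $S_P$ of $x$ whose $Q$-digit tuple in block $B_k$ equals $P$ for only finitely many $k$ is $\ab$-winning by a direct verification that the corresponding single-pattern $\mathscr{C}$ satisfies \refeq{friendly1} and \refeq{friendly2} (parallel to the computation in \refl{basicseqfriendly}). The finite intersection $\bigcap_{P \in \mathcal{B}} S_P$ is then $\ab$-winning, and by choosing $\mathcal{B}$ large enough to exhaust every pattern whose zero-count exceeds $(1-\epsilon)\sum_{j \in B_k} 1/q_j$ we force any $x$ in the intersection to satisfy $N_n^Q((0), x) \leq (1-\epsilon) Q_n^{(1)} + O(1)$ for all large $n$, hence $x$ is not simply $Q$-normal. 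A final application of \refl{halfwinning} produces the $1/2$-winning conclusion.
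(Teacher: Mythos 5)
Your proof of the stated theorem is correct and follows essentially the same route as the paper: claim one by chaining \refl{pq} with \refl{pqnn}, claim two by noting that $\mathscr{X}_{\mathscr{C}_{Q,I_{\ab}}}$ is $\ab$-winning and consists of points whose orbit visits $I_{\ab}$ only finitely often (hence cannot be uniformly distributed), and claim three via the containment $P_Q \subset \{x : x \text{ not simply } Q\text{-normal}\}$ when $Q_n^{(1)} \to \infty$. The only divergence is your final block construction for $Q$ that are $1$-divergent but not infinite in limit: the theorem's opening hypothesis ``Suppose that $Q$ is infinite in limit'' governs all three sentences (the paper's own proof of the third claim invokes \refl{pq}, which needs it), so that case is not required; and as written it is only a sketch --- in particular, when $Q$ is unbounded but not infinite in limit the family of ``too-many-zero'' patterns varies with the block and need not be finite, and adjacent bad patterns produce adjacent intervals with no separating gap, so the $\ab$-friendly framework does not apply directly. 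If you drop that unneeded portion, what remains is a complete proof matching the paper's.
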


\begin{proof}
The first conclusion follows directly from \refl{pq} and \refl{pqnn}. If $Q$ is $1$-divergent and $x$ is simply $Q$-normal, then every digit occurs infinitely often in the $Q$-Cantor series expansion of $x$. Thus, the set of numbers that is not simply $Q$-normal is $1/2$ winning by \refl{pq}. Additionally, the set of real numbers that are not $Q$-distribution normal contains $\mathscr{X}_{\mathscr{C}_{Q,I_{\ab}}}$ for all $\ab \in S$ and is $1/2$-winning as well.




\end{proof}

\begin{corollary}\labc{cantorwinning2}
If $Q$ is infinite in limit, then the set of numbers that are not $Q$-normal of order $2$ is a $1/2$-winning set.
\end{corollary}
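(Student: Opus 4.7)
The plan is to deduce this corollary directly from \reft{cantorwinning1} by observing a containment of sets. The text has already remarked that every $Q$-normal of order $k$ number is $Q$-ratio normal of order $k$: if all blocks $B$ of length $k$ satisfy $N_n^Q(B,x)/Q_n^{(k)} \to 1$, then the ratio $N_n^Q(B,x)/N_n^Q(B',x)$ of any two such limits equals $1$. Taking contrapositives with $k=2$, any real number that fails to be $Q$-ratio normal of order $2$ automatically fails to be $Q$-normal of order $2$.

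Consequently, the set of numbers not $Q$-normal of order $2$ contains the set of numbers not $Q$-ratio normal of order $2$. Since $Q$ is infinite in limit, the latter set is $1/2$-winning by \reft{cantorwinning1}. The final step is to note that any superset of an $\al$-winning set is $\al$-winning: if Alice has a strategy forcing $\bigcap_n B_n \subset A$, then the same strategy forces $\bigcap_n B_n \subset A \subset A'$ for any $A' \supset A$. Applying this with $\al=1/2$ yields the corollary.

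There is essentially no technical obstacle here; the only thing to be careful about is the direction of the inclusion and the (immediate) monotonicity of the $\al$-winning property with respect to set containment. No appeal to \refl{halfwinning}, \refl{basicseqfriendly}, or \reft{mainwinning} is needed beyond what is already packaged inside \reft{cantorwinning1}.
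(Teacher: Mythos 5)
Your proposal is correct and is precisely the argument the paper intends: the paper leaves the corollary without an explicit proof because it has already remarked that every number $Q$-normal of order $k$ is $Q$-ratio normal of order $k$, so the non-$Q$-normal-of-order-$2$ set contains the non-$Q$-ratio-normal-of-order-$2$ set from \reft{cantorwinning1}, and a superset of a winning set is winning. Your handling of the ratio-of-limits step and the monotonicity of the winning property is sound, so nothing further is needed.
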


\begin{theorem}\labt{cantorwinning3}
If $Q$ is any basic sequence, then the set of numbers that are not strongly $Q$-distribution normal is a $1/2$-winning set.
\end{theorem}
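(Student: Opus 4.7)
The plan is to adapt the argument for ordinary $Q$-distribution normality in \reft{cantorwinning1}: instead of using the hierarchy $\mc_{Q,I_{\ab}}$ directly, I would thin it out, replacing its levels by the preimages of $I_{\ab}$ under $T_{Q,Kn}$ for $n=1,2,3,\ldots$, with a single integer $K$ chosen once and for all in terms of $\ab$. The reason is that without any growth assumption on $Q$, the friendliness of $\mc_{Q,I_{\ab}}$ can fail between consecutive levels; but since each $q_j\geq 2$, the $K$-fold product $q_{Kn+1}\cdots q_{K(n+1)}$ can be forced above any prescribed threshold by taking $K$ large. The built-in flexibility of strong $Q$-distribution normality is precisely what lets us pay the price of working along a subsequence $\{T_{Q,kn+p}(x)\}_{n=1}^\infty$ rather than the full sequence.

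Given $\ab\in S$, I would first pick a positive integer $K$ with $2^K>6/((\al\be)^2\ga)$ and define $\mc^{(K)}=\{\mc^{(K)}_n\}_{n=1}^\infty$ by $\mc^{(K)}_n:=\mc_{Q,I_{\ab},Kn}$. The members of $\mc^{(K)}_n$ are periodic intervals of length $\la(I_{\ab})/(q_1\cdots q_{Kn})$ separated by gaps of length $(1-\la(I_{\ab}))/(q_1\cdots q_{Kn})$, and the scaling ratio between successive levels is $1/(q_{Kn+1}\cdots q_{K(n+1)})\leq 2^{-K}<(\al\be)^2\ga/6$. I would then mimic the computation in \refl{basicseqfriendly} with the block product $q_{Kn+1}\cdots q_{K(n+1)}$ playing the role of the single factor $q_{k+1}$: condition \refeq{friendly2} follows from the scaling inequality just noted, condition \refeq{friendly1} depends only on the ratio $\la(I_{\ab})/(1-\la(I_{\ab}))$ and so is inherited verbatim, and the requirement that each gap $D^{(K)}_{n,m}$ contain at least three members of $\mc^{(K)}_{n+1}$ is immediate, since each such gap contains roughly $(1-\la(I_{\ab}))\cdot q_{Kn+1}\cdots q_{K(n+1)}$ of them, which by \refl{maxabg} easily exceeds $3$.

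With $\mc^{(K)}$ shown to be $\ab$-friendly, \reft{mainwinning} delivers that $\mathscr{X}_{\mc^{(K)}}$ is $\ab$-winning. Any $x$ in this set satisfies $T_{Q,Kn}(x)\in I_{\ab}$ for only finitely many $n$, so $\{T_{Q,Kn}(x)\}_{n=1}^\infty$ visits $I_{\ab}$ with asymptotic density $0$ while $\la(I_{\ab})>0$, and hence is not uniformly distributed mod $1$. Taking $k=K$ and $p=K\in[1,K]$ in the definition of strong $Q$-distribution normality, the sequence $\{T_{Q,Kn+K}(x)\}_{n=1}^\infty$ is merely a shift of the above and therefore also fails to be uniformly distributed mod $1$, so $x$ is not strongly $Q$-distribution normal. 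Since $\ab\in S$ was arbitrary, \refl{halfwinning} closes the argument. I expect the only real obstacle to be arranging the $K$-block hierarchy so that \reft{mainwinning} still applies; once that is in place, the deduction is essentially identical to the distribution-normal case treated in \reft{cantorwinning1}.
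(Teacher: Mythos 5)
Your proposal is correct and is essentially the paper's own proof: your thinned hierarchy $\mc^{(K)}_n=\mc_{Q,I_{\ab},Kn}$ is exactly the hierarchy $\mc_{\Psi_{Q,t},I_{\ab}}$ the paper obtains by grouping the $q_j$ into blocks of length $t$ with $2^t>6/((\al\be)^2\ga)$, after which both arguments invoke \refl{basicseqfriendly}, \reft{mainwinning}, and \refl{halfwinning} in the same way. Your explicit verification that such an $x$ fails strong $Q$-distribution normality (via the subsequence with $k=p=K$) is a correct elaboration of a step the paper merely asserts.
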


\begin{proof}
Suppose that $\ab \in S$.  Put $t=1+\ceil{\log_2 \frac {6} {(\al\be)^2\ga}}$.
Let the basic sequence $\Psi_{Q,t}=\{\psi_{t,j}\}_{j=1}^{\infty}$ be given by
$\psi_{t,j}=q_{(j-1)t+1} \cdot q_{(j-1)t+2} \cdots q_{jt}$.
Thus, since $q_n \geq 2$ for all $n$, $\psi_{t,j} \geq 2^t>6/((\al\be)^2\ga)$.  Then $\mathscr{X}_{\mathscr{C}_{\Psi_{Q,t},I_{\ab}}}$ is $\ab$-winning.
But $\mathscr{X}_{\mathscr{C}_{\Psi_{Q,t},I_{\ab}}}$  is contained in the set of numbers that are not strongly $Q$-distribution normal, so the conclusion follows.
\end{proof}

\begin{corollary}
The Hausdorff dimension of all of the sets considered in \reft{cantorwinning1}, \refc{cantorwinning2}, and \reft{cantorwinning3} is $1$.
\end{corollary}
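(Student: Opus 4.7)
The plan is to invoke property (1) of winning sets stated in the introduction: if $S \subset \mathbb{R}^n$ is an $\alpha$-winning set, then the Hausdorff dimension of $S$ is $n$. Each of the sets appearing in \reft{cantorwinning1}, \refc{cantorwinning2}, and \reft{cantorwinning3}—namely, for $Q$ infinite in limit, the set of numbers not $Q$-ratio normal of order $2$, the set of numbers not $Q$-distribution normal, and the set of numbers not $Q$-normal of order $2$; for $Q$ additionally $1$-divergent, the set of numbers not simply $Q$-normal; and for any basic sequence $Q$, the set of numbers not strongly $Q$-distribution normal—has already been shown in those results to be a $1/2$-winning subset of $\mathbb{R}$. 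A $1/2$-winning set is by definition $(1/2,\beta)$-winning for every $\beta \in (0,1)$, which is precisely the definition of $\alpha$-winning with $\alpha = 1/2$. Applying property (1) with $n = 1$ and $\alpha = 1/2$, each of these sets has Hausdorff dimension equal to $1$.

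There is no serious obstacle here: the corollary is an essentially immediate consequence of the general theory of Schmidt games combined with the previously established theorems. The only minor bookkeeping is to note that each set lies in $\mathbb{R} = \mathbb{R}^1$ (so the ambient dimension in property (1) is $1$) and that the notion of $1/2$-winning used throughout the paper coincides with $\alpha$-winning for $\alpha = 1/2$, so no further argument, such as direct Hausdorff dimension computations or constructions of Cantor-like subsets, is required.
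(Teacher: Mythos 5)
Your proposal is correct and matches the paper's (implicit) argument exactly: the corollary is stated without proof precisely because it follows immediately from property (1) of winning sets in the introduction, applied with $n=1$ and $\al=1/2$ to the $1/2$-winning sets established in the cited results.
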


\begin{remark}
The proof of \reft{cantorwinning1} also shows that if $Q$ is infinite in limit, then the set of real numbers $x$ such that $\{\qn x \pmod{1} \}_{n=1}^{\infty}$ is not dense in $[0,1)$ is a $1/2$-winning set.  Similar results can be stated for the sets considered in \reft{cantorwinning1} and \reft{cantorwinning3}.
\end{remark}

\bibliographystyle{amsplain}

\end{document}